\documentclass[11pt]{amsart}
\usepackage{amsmath}
\usepackage{amssymb, verbatim}
\usepackage{pstricks,pst-node,pst-plot}
\usepackage{comment}
\usepackage{color}
\usepackage{extarrows}

\title[]{The Strominger system in the square of a K\"ahler class}
\author[T. C. Collins]{Tristan C. Collins}
  \email{tristanc@mit.edu}
  \address{Department of Mathematics, Massachusetts Institute of Technology, 77 Massachusetts Avenue, Cambridge, MA 02139}
 \thanks{T.C.C is supported in part by NSF CAREER grant DMS-1944952 and an Alfred P. Sloan Fellowship. }

 \author[S. Picard]{Sebastien Picard}
  \email{spicard@math.ubc.ca}
  \address{Department of Mathematics, UBC, 1984 Mathematics Road,
    Vancouver, BC, Canada}
  \thanks{}
  
 \author[S.-T. Yau]{Shing-Tung Yau}
  \email{styau@tsinghua.edu.cn}
  \address{Department of Mathematics, Tsinghua University, Haidian District, Beijing 100084, China}
  \thanks{} 

\theoremstyle{plain}
\newtheorem{thm}{Theorem}[section]
\newtheorem{prop}[thm]{Proposition}

\newtheorem{lem}[thm]{Lemma}

\theoremstyle{definition}

\newtheorem{rk}[thm]{Remark}

\numberwithin{equation}{section}

\newcommand{\del}{\partial}

\newcommand{\dbar}{\overline{\del}}
\newcommand{\ddb}{i \del\dbar}

\newcommand{\F}{\mathcal{F}}

\newcommand{\be}{\begin{equation}}
\newcommand{\bea}{\begin{eqnarray}}
\newcommand{\eea}{\end{eqnarray}} 
\newcommand{\ee}{\end{equation}}

\renewcommand{\geq}{\geqslant}

\renewcommand{\epsilon}{\varepsilon}
\renewcommand{\phi}{\varphi}

\begin{document}

\dedicatory{Dedicated to the memory of Jean-Pierre Demailly}

\begin{abstract}
We study the Strominger system with fixed balanced class. We show that classes which are the square of a K\"ahler metric admit solutions to the system for vector bundles satisfying the necessary conditions. Solutions are constructed by deforming a Calabi-Yau metric and a Hermitian-Yang-Mills metric along a path inside the given cohomology class.
  \end{abstract}

\maketitle

\section{Introduction}
Let $M$ be a compact complex manifold of dimension $n$, and suppose $\tau \in H^{n-1,n-1}_{BC}(M,\mathbb{R})$ is a Bott-Chern cohomology class such that there exists a positive real $(1,1)$-form $\omega>0$ with
\[
  \omega^{n-1} \in \tau.
\]
A class $\tau$ with this property is called a balanced class. Here the Bott-Chern class is defined as
\[
H^{n-1,n-1}_{BC}(M,\mathbb{R}) = \{ \beta \in \Omega^{n-1,n-1}(X,\mathbb{R}) : d \beta = 0 \} / {\rm Im} \, \ddb.
\]
The cone of balanced classes appears in the study of movable curves initiated by Boucksom-Demailly-Paun-Peternell \cite{BDPP}; see \cite{FuXiao,Toma}. Here we consider the problem of finding optimal representatives in a given balanced class. Various approaches to this question have been proposed in the literature \cite{FPPZ,FWW,FWW15,GF-survey,Popo,PPZ18a,STW,TW,TsengYau}, and the Strominger system \cite{Strom} stands out as a natural candidate due to its importance in theoretical physics \cite{AGS,AOMSS,BBFTY,CdlOmcO,dlOSvanes,McOSvanes}.

In this note, we will solve the Strominger system in balanced classes which come from K\"ahler classes. Heterotic string theory on K\"ahler manifolds starts with the work of Candelas-Horowitz-Strominger-Witten \cite{CHSW}. Solutions with non-zero torsion from general gauge bundles over K\"ahler manifolds were obtained by Li-Yau \cite{LY05}, and Andreas and Garcia-Fernandez \cite{AGF,AGF2}. The difference with the present work is that we fix the cohomology class of the solution. The question of existence of solutions to the system on a non-K\"ahler manifold is widely open \cite{Fei,FeiYau,FHP,FIUV,FGV,FTY,FY,GoldsteinPro,LY86,OUV,PPZ18}.

Let $X$ be be a compact K\"ahler Calabi-Yau manifold of complex dimension $n=3$ equipped with a nowhere vanishing holomorphic $(3,0)$ form $\Omega$. Let $\tau \in H^{2,2}_{BC}(X,\mathbb{R})$ be such that
\[
\tau = [\omega_0^2]
\]
for a K\"ahler metric $\omega_0$. Let $E \rightarrow X$ be a holomorphic vector bundle such that
\[
c_2(E)=c_2(X), \quad {\rm deg}_{\omega_0}(E)=0
\]
and assume that $E$ is stable with respect to $[\omega_0]$. By Yau's theorem \cite{Yau78}, there exists a K\"ahler Ricci-flat metric $\hat{\omega} \in [\omega_0]$, and we may write $\tau = [\hat{\omega}^2]$. By the Donaldson-Uhlenbeck-Yau theorem \cite{Donaldson,UY}, there is a Hermitian-Yang-Mills metric $\hat{H}$, solving
\[
\hat{\omega}^2 \wedge F_{\hat{H}} = 0,
\]
where $F_{\hat{H}}$ is the curvature of the Chern connection of $\hat{H}$. We will seek a new metric $H$ on $E$ and a new metric $\omega_\Theta$ on $X$ of the form
\[
| \Omega |_{\omega_\Theta} \omega_\Theta^2 = |\Omega|_{\hat{\omega}} \, \hat{\omega}^2 + \Theta > 0, \quad \Theta = \ddb \beta
\]
for $\beta \in \Omega^{1,1}(X,\mathbb{R})$, so that
\[
\omega_\Theta^2 \wedge F_{H} = 0,
\]
\[
\ddb \omega_\Theta = {\rm Tr} \, R_\Theta \wedge R_\Theta - {\rm Tr} \, F_{H} \wedge F_{H}.
\]
Here $R_\Theta$ denotes the curvature of the Chern connection of $\omega_\Theta$. By construction, the metric $\omega_\Theta$ is conformally balanced; this means
\[
d ( |\Omega|_{\omega_\Theta} \omega_\Theta^2)=0.
\]
Furthermore, $\omega_\Theta$ has (conformally) balanced class
\[
  | \Omega |_{\omega_\Theta} \omega_\Theta^2 \in [ |\Omega|_{\hat{\omega}} \, \hat{\omega}^2] \in H^{2,2}_{BC}(X,\mathbb{R}).
\]
Since $|\Omega|_{\hat{\omega}}$ is constant for a Calabi-Yau metric $\hat{\omega}$, we are looking for solutions to the Strominger system in a rescaling of the balanced class $\tau$. Our main theorem states that the system is solvable in the large radius limit.

\begin{thm} Let $X$ be a complex manifold of dimension 3 equipped with a holomorphic volume form $\Omega$ and a K\"ahler Ricci-flat metric $\hat{\omega}$. Let $E \rightarrow X$ be a holomorphic vector bundle such that $c_2(E)=c_2(X)$ and suppose $E$ is of degree zero and stable with respect to $[\hat{\omega}^2]$. 
\smallskip  
\par There exists $L_0 \geq 1$ depending on $(X,E,\hat{\omega})$ with the following property. For all $M_0 \geq L_0$, there exists a pair $(\omega,H)$ with
\[
  [| \Omega|_\omega \, \omega^2] = M_0 [\hat{\omega}^2] \in H^{2,2}_{BC}(X,\mathbb{R}),
\]
which solves the system
\[
\omega^2 \wedge F_{H} = 0, \quad d (| \Omega|_\omega \, \omega^2)=0,
\]
\[
\ddb \omega = {\rm Tr} \, R_\omega \wedge R_\omega - {\rm Tr} \, F_{H} \wedge F_{H}.
\]
\end{thm}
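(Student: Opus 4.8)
The plan is to solve the system perturbatively in the large radius limit, treating $t=M_0^{-1}$ as a small parameter and constructing the solution as a deformation of the rescaled background pair $(\omega_0,\hat H)$ with $\omega_0=M_0^2\hat\omega$. First I would dispose of the conformally balanced equation by building it into the ansatz: writing $|\Omega|_\omega\,\omega^2=M_0\hat\omega^2+\ddb\beta$ for an unknown real $(1,1)$-form $\beta$, the equation $d(|\Omega|_\omega\omega^2)=0$ holds automatically and the Bott--Chern class is pinned at $M_0[\hat\omega^2]$ for every $\beta$. Since in complex dimension three the Michelsohn correspondence $\omega\mapsto\omega^2$ is a diffeomorphism onto positive $(2,2)$-forms, the relation above determines $\omega=\omega(\beta)$ as a pointwise-algebraic, hence smoothly $\beta$-dependent, function of $M_0\hat\omega^2+\ddb\beta$, provided the right-hand side stays positive; this holds for $\beta$ small relative to $M_0$. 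Thus the unknowns reduce to $\beta$ and the Hermitian metric $H$.

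Next I would eliminate $H$ via the Hermitian--Yang--Mills equation. Passing to the balanced representative $\tilde\omega$ with $\tilde\omega^2=|\Omega|_\omega\omega^2=M_0\hat\omega^2+\ddb\beta$, the equation $\omega^2\wedge F_H=0$ is equivalent to $\tilde\omega^2\wedge F_H=0$, i.e.\ to $H$ being Hermitian--Yang--Mills for the balanced class $M_0[\hat\omega^2]$. This class is independent of $\beta$ and, being a positive multiple of $[\hat\omega^2]$, has the same slopes; hence $E$ is stable of degree zero with respect to it, and the Li--Yau extension of the Donaldson--Uhlenbeck--Yau theorem to Gauduchon/balanced metrics yields a unique such $H=H(\beta)$. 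As $E$ is stable, hence simple, the linearized Hermitian--Yang--Mills operator is the real Laplacian on trace-free endomorphisms with trivial kernel and cokernel, so the implicit function theorem shows $H(\beta)$ depends smoothly on $\beta$ with controlled derivatives. Substituting $\omega(\beta)$ and $H(\beta)$ into the anomaly equation leaves a single equation for $\beta$,
\[
\mathcal A(\beta):=\ddb\omega(\beta)-\Tr R_{\omega(\beta)}\wedge R_{\omega(\beta)}+\Tr F_{H(\beta)}\wedge F_{H(\beta)}=0 .
\]

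I would then study $\mathcal A$ in the large radius limit. Since curvature is invariant under constant rescaling, $R_{\omega_0}=R_{\hat\omega}$ and $\ddb\omega_0=0$, so at leading order the equation reads $\ddb\gamma=\Tr R_{\hat\omega}\wedge R_{\hat\omega}-\Tr F_{\hat H}\wedge F_{\hat H}$ for the first-order metric correction $\gamma$. The right-hand side is $\ddb$-exact: this is the necessary cohomological condition, and it follows from $c_2(E)=c_2(X)$ together with Ricci-flatness (so that $\Tr R_{\hat\omega}=0$) and the degree-zero normalization, via the $\ddb$-lemma on the compact K\"ahler manifold $X$, which upgrades vanishing in $H^4_{dR}(X)$ to vanishing in $H^{2,2}_{BC}(X)$. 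Crucially, $\mathcal A(\beta)$ lands in $\mathrm{Im}\,\ddb$ for every $\beta$, since $\ddb\omega(\beta)$ is manifestly $\ddb$-exact and the curvature term is $\ddb$-exact by the same argument applied along the family. Hence $\mathcal A$ maps into the space of $\ddb$-exact $(2,2)$-forms, and the goal is to produce a zero by the implicit function theorem based at this approximate background.

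The main obstacle is the invertibility, with bounds uniform as $M_0\to\infty$, of the linearization $D\mathcal A$. Linearizing the ansatz shows $\gamma$ and $\beta$ are related by $\gamma=M_0\,\hat T^{-1}\ddb\beta$, where $\hat T\gamma=2\hat\omega\wedge\gamma-\tfrac12(\Lambda_{\hat\omega}\gamma)\,\hat\omega^2$ is the $M_0$-independent algebraic isomorphism $\Omega^{1,1}\to\Omega^{2,2}$ coming from the Michelsohn map; consequently the principal part of $D\mathcal A$ is the fourth-order operator $\beta\mapsto M_0\,\ddb\,\hat T^{-1}\ddb\beta$. This operator is \emph{not} elliptic between $(1,1)$- and $(2,2)$-forms: its symbol $\xi^{1,0}\wedge\xi^{0,1}\wedge\hat T^{-1}(\xi^{1,0}\wedge\xi^{0,1}\wedge\,\cdot\,)$ is degenerate, and $\ddb$-closed $(1,1)$-forms lie in its kernel. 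The essential analytic work is therefore to fix the gauge freedom in $\beta$ (working modulo $\ker\ddb$, e.g.\ on co-exact representatives) and to prove, using Bott--Chern Hodge theory and the K\"ahler identities on the Ricci-flat background, that $\beta\mapsto\ddb\,\hat T^{-1}\ddb\beta$ restricts to an isomorphism onto the $\ddb$-exact $(2,2)$-forms with a right inverse bounded independently of $M_0$; one must show the degenerate directions are exactly the gauge directions and are compatible with the target $\mathrm{Im}\,\ddb$. Granting this, and after the rescaling that removes the overall factor $M_0$, the implicit function theorem yields a zero $\beta$ of $\mathcal A$ for all $M_0\geq L_0$, producing the pair $(\omega,H)$ that solves the Strominger system in the class $M_0[\hat\omega^2]$.
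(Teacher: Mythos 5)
Your overall strategy is the same as the paper's: fix the balanced class through the ansatz $|\Omega|_\omega\,\omega^2=M_0\hat\omega^2+\ddb\beta$ (so the conformally balanced equation and the class are automatic), perturb off the K\"ahler Ricci-flat plus Hermitian--Yang--Mills background, and close with an implicit function theorem in the large radius limit; the paper's formulation with fixed class and tunable $\alpha'$ is equivalent to your $M_0\to\infty$ formulation by the scaling $\alpha'\sim M_0^{-2}$. Your one structural deviation --- eliminating $H$ first by solving the Hermitian--Yang--Mills equation for each fixed $\beta$ via Li--Yau/Donaldson--Uhlenbeck--Yau for the balanced metric $\tilde\omega$, rather than treating the pair $(u,\Theta)$ jointly with an upper-triangular linearization as the paper does --- is legitimate, since for balanced metrics the slopes depend only on the class $M_0[\hat\omega^2]$ and hence stability is inherited.

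However, the proposal has a genuine gap exactly at its analytic core: the claimed invertibility, with $M_0$-uniform bounds, of the linearized operator $\beta\mapsto M_0\,\ddb\,\hat T^{-1}\ddb\beta$ modulo gauge. You correctly diagnose that this operator is not elliptic on $(1,1)$-forms and has the gauge kernel $\ker\ddb$, you state what would have to be proven, and then proceed by ``granting this.'' That step is the entire content of the theorem, and the paper's key idea is precisely the device that dissolves it: take the unknown to be the exact form $\Theta=\ddb\beta\in{\rm Im}\,\ddb\cap\Omega^{2,2}(X)$ rather than its potential $\beta$. The ansatz, the metric $\omega_\Theta$, and the equation depend only on $\Theta$, so the gauge freedom (which is exactly $\ker\ddb$) never appears; and since $d\dot\Theta=0$ and $|\Omega|_{\hat\omega}$ is constant, the K\"ahler identity $[\Lambda_{\hat\omega},\bar\partial]=-i\partial^\dagger$ turns the linearization into
\[
\dot\Theta\ \longmapsto\ \ddb\Big[\tfrac{1}{2|\Omega|_{\hat\omega}}\Lambda_{\hat\omega}\dot\Theta\Big]\;=\;-\tfrac{1}{2|\Omega|_{\hat\omega}}\Delta_{\hat\omega}\dot\Theta,
\]
the Hodge Laplacian acting on ${\rm Im}\,\ddb$, which is an isomorphism by Hodge theory and the $\ddb$-lemma. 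Your gauge-fixed operator is this same map read through the identification $\Omega^{1,1}/\ker\ddb\cong{\rm Im}\,\ddb$, so your plan can be completed, but as written the decisive invertibility is asserted rather than proven. Two smaller omissions: the smooth dependence $\beta\mapsto H(\beta)$ with uniform bounds does need the implicit function theorem for the HYM equation at each $\beta$ (your simplicity argument supplies the linear invertibility), and the implicit function theorem only yields $C^{k,\alpha}$ solutions, so smoothness requires a separate elliptic-regularity argument, which the paper carries out in the regime $|\alpha' R_g|_g\ll 1$.
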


\begin{rk} The system was previously solved by Andreas and Garcia-Fernandez \cite{AGF} for holomorphic bundles $E \rightarrow (X,\hat{\omega})$ over K\"ahler Calabi-Yau manifolds assuming the necessary conditions that $E$ is stable, degree 0, and with $c_2(E)=c_2(X)$. The setup in \cite{AGF,AGF2,LY05} is a bit different, as it does not fix the balanced class of the solution, though it was expected \cite{GF-survey} that this approach could be modified to control the balanced class. We confirm this here and present a method which fixes the balanced class, and we refer to \S \ref{section:tangentinstantons} for further discussion and comparisons to prior work.
\end{rk}

There is another way to view this result by introducing a parameter $\alpha'>0$ and looking for solutions in the fixed balanced class $\tau = [\hat{\omega}^2]$. From a solution produced by the theorem above with class $| \Omega|_\omega \, \omega^2 \in L [\hat{\omega}^2]$, we can consider $\tilde{\omega} = L^{-2} \omega$, which now solves
\[
\tilde{\omega}^2 \wedge F_{H} = 0, \quad d (| \Omega|_{\tilde{\omega}} \, \tilde{\omega}{}^2)=0,
\]
\[
\ddb \tilde{\omega} = \alpha' ({\rm Tr} \, R_{\tilde{\omega}} \wedge R_{\tilde{\omega}} - {\rm Tr} \, F_{H} \wedge F_{H}),
\]
for $\alpha' = L^{-2}$, and with
\[
[  | \Omega|_{\tilde{\omega}} \, \tilde{\omega}^2] = [\hat{\omega}^2] \in H^{2,2}_{BC}(X,\mathbb{R}).
\]
From this point of view, the scale of the balanced class is fixed, but the parameter $\alpha'$ is tunable and we obtain a sequence of solutions as $\alpha' \rightarrow 0$.

M. Garcia-Fernandez and R. Gonzalez Molina have recently discovered \cite{GM} a Futaki invariant, which on $\ddb$-manifolds is a pairing $\langle \F_0, \tau \rangle \in \mathbb{C}$ for each balanced class $\tau \in H^{2,2}_{BC}(\mathbb{R})$. They proved that if $\tau$ admits a solution to the system (more precisely, the version of the system described in \S \ref{section:tangentinstantons}), then $\langle \F_0,\tau \rangle = 0$. As a consequence of \S \ref{section:tangentinstantons}, on a simply-connected K\"ahler Calabi-Yau threefold then $\langle \F_0, \tau \rangle =0$ for all balanced classes of the form $\tau = [\omega^2]$ where $\omega$ is a K\"ahler metric.

The cone of balanced classes on a K\"ahler manifold was studied by Fu-Xiao \cite{FuXiao}, and they found that not every balanced class comes from the square of a K\"ahler metric (see \cite{FuXiao,Tosatti} for examples). Thus to complete the picture, it remains to understand balanced classes which do not come from K\"ahler classes, but rather are squares of classes on the boundary of the K\"ahler cone. 

\bigskip
\par {\bf Acknowledgements:} We thank M. Garcia-Fernandez and R. Gonzalez Molina for comments and discussions.

\section{The linearized system}
\subsection{Setup}
Let $X$ be a compact K\"ahler manifold of complex dimension $n=3$ admitting a holomorphic volume form $\Omega$. To solve the system, we will deform a pair of metrics $(\hat{H},\hat{\omega})$ where $\hat{\omega}$ is a K\"ahler Ricci-flat on $X$ and $\hat{H}$ is a vector bundle metric on $E \rightarrow X$ solving the Hermitian-Yang-Mills equation with respect to $\hat{\omega}$.
\smallskip
\par $\bullet$ To deform $\hat{H}$, we will look for solutions of the form $e^u \hat{H}$ with
\[
u \in H_0(E) = \bigg\{  u \in \Gamma({\rm End} \, E) : u^{\dagger_{\hat{H}}} = u, \ {\rm Tr} \, u = 0 \bigg\}.
\]
 Our conventions for the bundle metric $\hat{H}$ are as follows. First, the inner product on sections $u,v \in \Gamma(E)$ is given in a local trivialization by
\[
\langle u,v \rangle_{\hat{H}} = u^\alpha \hat{H}_{\alpha \bar{\beta}} \overline{v^\beta},
\]
and the adjoint $u^{\dagger_{\hat{H}}}$ of an endomorphism $u$ is with respect to this inner product. Next, we denote the inverse of the local matrix $\hat{H}$ by $\hat{H}^{\bar{\beta} \alpha}$, and if $H = e^u \hat{H}$ then $(e^u)_\alpha{}^\beta = H_{\alpha\bar{\mu}} \hat{H}^{\bar{\mu} \beta}$. For $u \in H_1(E)$, we write $F_u = F_{H}= \bar{\partial} ((\partial H) H^{-1})$ to denote the curvature of the Chern connection of $H=e^u \hat{H}$. Given a hermitian metric $\omega= i g_{j \bar{k}} dz^j \wedge d \bar{z}^k$ on the complex manifold, we use the notation
\be \label{LambdaF}
i\Lambda_\omega F_H = - g^{j \bar{k}} \partial_{\bar{k}} ( \partial_j H H^{-1}) \in \Gamma({\rm End} \, E)
\ee
so that
\[
\omega^2 \wedge i F_H = (i \Lambda_\omega F_H) \, \omega^3,
\]
and the Hermitian-Yang-Mills equation is $i \Lambda_\omega F_u = 0$. We note that
\be \label{traceLambdaF}
{\rm Tr} \,  i F_u = -  i \partial \bar{\partial} \log \det e^u + {\rm Tr} \, i F_{\hat{H}}.
\ee
By the Donaldson-Uhlenbeck-Yau theorem \cite{Donaldson,UY}, if $E \rightarrow (X,\hat{\omega})$ is stable then it admits a metric solving the Hermitian-Yang-Mills equation. Recall that $E \rightarrow (X,\hat{\omega})$ is stable if the following stability condition holds: for all coherent subsheaves $S \subset E$ with $0< {\rm rk}(S) < {\rm rk}(E)$, then
\[
\mu(S) < \mu(E),
\]
where $\mu(S) = {1 \over {\rm rk}(S)} c_1(S) \wedge [\hat{\omega}^2]$. 
\smallskip
\par $\bullet$ To deform $\hat{\omega}$, we will look for solutions of the form
\be \label{balanced-class}
 |\Omega |_{\omega_\Theta} \omega_{\Theta}^2 := |\Omega|_{\hat{\omega}} \, \hat{\omega}^2 + \Theta > 0
\ee
where
\[
\Theta \in \mathcal{U} = \bigg\{ \Theta \in {\rm Im} \, \ddb \cap \Omega^{2,2}(X): \quad  |\Omega |_{\hat{\omega}} \, \hat{\omega}^2 + \Theta > 0 \bigg\}.
\]
Since $\hat{\omega}$ is K\"ahler and $|\Omega|_{\hat{\omega}}$ is constant, then $d (|\Omega|_{\hat{\omega}} \hat{\omega}^2)=0$. For an arbitrary $\Theta \in \mathcal{U}$, the formula (\ref{balanced-class}) defines a conformally balanced metric $\omega_\Theta$ \cite{Michelsohn} by taking a square root of a positive $(2,2)$ form. To be concrete, we note the explicit formula which can be found in \cite{PPZ18a}. If we write
\[
\Psi =  \sum_{k,j}c_{kj} \Psi^{k \bar{j}} dz^1 \wedge d \bar{z}^1 \wedge \dots \wedge \widehat{dz^k} \wedge d \bar{z}^k \wedge \dots \wedge dz^j \wedge \widehat{d \bar{z}^j} \wedge \dots \wedge dz^3 \wedge d \bar{z}^3,
\]
with $c_{kj} = i^{2} 2!  (sgn(k,j)) $, then the metric $\omega = i g_{j \bar{k}} dz^j \wedge d \bar{z}^k$ solving the equation $| \Omega |_{\omega} \, \omega^{2} = \Psi$ is locally given by
\be \label{square-root-formula}
g_{j \bar{k}} = {\det (\Psi^{p \bar{q}}) \over f(z) \overline{f(z)}} (\Psi^{-1})_{j \bar{k}},
\ee
where $\Omega = f(z) dz^1 \wedge dz^2 \wedge dz^3$ in local coordinates,
\be \label{dilaton-defn}
|\Omega |^2_\omega = {f(z) \overline{f(z)} \over \det g_{j \bar{k}}},
\ee
and $\Psi^{k \bar{j}} (\Psi^{-1})_{\ell \bar{j}} = \delta^k{}_\ell$.
\smallskip
\par $\bullet$ Our space of deformations of $(\hat{H},\hat{\omega})$ is then
\[
Z =  H_0(E) \times \mathcal{U}
\]
and we let $\F: \mathbb{R} \times Z \rightarrow W$ be given by
\be \label{defn-F}
\F \left( \alpha', (u,\Theta) \right) = \begin{bmatrix}  |\Omega|_{\omega_\Theta} e^{-u/2} [\omega_\Theta^2 \wedge  i F_{u}] e^{u/2} \\ \ddb \omega_\Theta - \alpha' ({\rm Tr} \, R_\Theta \wedge R_\Theta - {\rm Tr} \, F_{u} \wedge F_{u}) \end{bmatrix}.
\ee
The image of $\F$ is contained in
\[
  W = V \times ({\rm Im} \, \ddb \cap \Omega^{2,2}(X)),
\]
where
\[
V= \bigg\{  s \in \Omega^6(X,{\rm End} \, E) : s^{\dagger_{\hat{H}}} = s, \ \int_X {\rm Tr} \, s = 0 \bigg\}.
\]
We now verify that the image $\F$ is indeed in $W$. First, we note that
\[
\int_X |\Omega|_{\omega_\Theta} \omega_\Theta^2 \wedge {\rm Tr} \, [e^{-u/2} i F_{u} e^{u/2}] = \int_X |\Omega|_{\hat{\omega}} \hat{\omega}^2 \wedge {\rm Tr} \, i F_{\hat{H}} = 0
\]
by (\ref{traceLambdaF}), (\ref{balanced-class}), and Stokes's theorem. Next, we verify the condition
\[
\bigg[ e^{-u/2} (i\Lambda_{\omega_\Theta} F_{u}) e^{u/2} \bigg]^{\dagger_{\hat{H}}} = e^{-u/2} (i\Lambda_{\omega_\Theta} F_{u}) e^{u/2}.
\]
The expression (\ref{LambdaF}) shows that $(i \Lambda_\omega F) H$ is
a hermitian matrix, which we write $[i \Lambda_\omega F]^{\dagger_H}=i
\Lambda_\omega F$ i.e. $(i \Lambda F)$ is selfadjoint with respect to
$H$. We will need the image of $\F$ to be selfadjoint with respect to
the reference $\hat{H}$, which is the reason for conjugating by
$e^{u/2}$. In general, given two hermitian positive definite matrices
$\hat{H}$, $H$ and and a matrix $A$, if $AH$ is a hermitian matrix then $(h^{-1/2} A h^{1/2}) \hat{H}$ is a hermitian matrix where $h = H \hat{H}^{-1}$. This shows that $e^{-u/2} (i\Lambda_{\omega_\Theta} F_{u}) e^{u/2}$ is self-adjoint with respect to $\hat{H}$.

\par Finally, the hypothesis $c_2(T^{1,0}X)=c_2(E)$ guarantees that the image of the second component of $\F$ is $\ddb$ exact by the $\ddb$-lemma.

\par We note that $\F(0,(0,0))=0$. We will use the implicit function theorem to show that for each $\alpha' \in (-\epsilon,\epsilon)$ there is a pair $(u_\alpha,\Theta_\alpha)$ so that $\F(\alpha',(u_\alpha,\Theta_\alpha)) = 0$. For this, we will calculate the linearization of $\F$ at the origin in the $W$ directions, which we write as
\[
(D_2 \F)|_{0}(\dot{u}, \dot{\Theta}) = \begin{bmatrix} L_1 & A  \\ 0 & L_2 \end{bmatrix} \begin{bmatrix} \dot{u} \\ \dot{\Theta} \end{bmatrix}.
\]
We now compute $L_1,A,L_2$.

\subsection{Linearized Yang-Mills equation}
Let $H(t)$ be a path of metrics with $H(0) = \hat{H}$. Differentiating the curvature gives
\[
{d \over dt} \bigg|_{t=0} F_{j \bar{k}} = - \partial_{\bar{k}} {d \over dt} \bigg|_{t=0} (\partial_j H H^{-1})
\]
which can be written as
\[
{d \over dt} \bigg|_{t=0} F_{j \bar{k}} = - \partial_{\bar{k}} \nabla_j ( \dot{h} h^{-1}),
\]
with $h = H \hat{H}^{-1}$ and $\nabla$ the Chern connection of $\hat{H}$ acting on sections of ${\rm End} \, E$. Therefore, along a path $H= e^{u(t)} \hat{H}$ with $u(0)=0$ and $F_{\hat{H}} \wedge \hat{\omega}^2 = 0$, then
\[
{d \over dt} \bigg|_{t=0} \bigg[  |\Omega|_{\hat{\omega}} e^{-u(t)/2} [\hat{\omega}^2 \wedge  i F_{u(t)}] e^{u(t)/2} \bigg] =  - \hat{g}^{j \bar{k}} \partial_{\bar{k}} \nabla_j \dot{u} \, \otimes |\Omega|_{\hat{\omega}} {\hat{\omega}^3 \over 3!}.
\]
We let $L_1: \Gamma({\rm End} \, E) \rightarrow \Omega^6({\rm End \, E})$ be
\[
L_1 h := - \hat{g}^{j \bar{k}} \partial_{\bar{k}} \nabla_j h \, \otimes |\Omega|_{\hat{\omega}} {\hat{\omega}^3 \over 3!}.
\]
We can also vary the first component of $\F$ along a path of hermitian metrics on the base manifold $X$. Let $\Theta(t)$ be a path of exact $(2,2)$-forms with $\Theta(0)=0$. Then
\[
{d \over dt} \bigg|_{t=0} \bigg[  |\Omega|_{\omega_{\Theta(t)}} \, \omega_{\Theta(t)}^2 \wedge  i \hat{F}  \bigg] =  2 |\Omega|_{\hat{\omega}} \, \dot{\omega} \wedge \hat{\omega} \wedge i \hat{F}
\]
where $\dot{\omega} = {d \over dt} \big|_{t=0} \, \omega_{\Theta(t)}$ will be computed in the next section. The result will be (\ref{variation-ddb-omega}), and we let
\[
A(\dot{\Theta}) := \Lambda_{\hat{\omega}} \dot{\Theta} \wedge \hat{\omega} \wedge i \hat{F}.
\]
To end this section, we note the identity
\be \label{bochnerformula}
\int_X \langle L_1 h, h \rangle_{\hat{H}} =  {1 \over 2} \int_X \bigg[ |\partial^{\hat{H}} h|^2_{\hat{g},\hat{H}} + |\bar{\partial} h|^2_{\hat{g},\hat{H}} \bigg] |\Omega|_{\hat{\omega}} {\hat{\omega}^3 \over 3!},
\ee
which holds for all self-adjoint endomorphisms $h \in \Gamma({\rm End} \, E)$. The inner product on endomorphisms is $\langle h_1, h_2 \rangle_{\hat{H}} = {\rm Tr} \, h_1 h_2^{\dagger_{\hat{H}}}$. To derive (\ref{bochnerformula}), we start with
\bea
\hat{g}^{j \bar{k}}  \partial_{\bar{k}} \partial_j \langle h,h \rangle_{\hat{H}} &=&  \langle \hat{g}^{j \bar{k}} \partial_{\bar{k}} \nabla_j h,h \rangle_{\hat{H}} +  \langle h, \hat{g}^{\bar{j} k} \nabla_k \partial_{\bar{j}} h  \rangle_{\hat{H}}  +  \hat{g}^{j \bar{k}} \langle \nabla_j h, \nabla_k h \rangle_{\hat{H}} \nonumber\\
&&+  \hat{g}^{j \bar{k}} \langle \partial_{\bar{k}} h, \partial_{\bar{j}} h \rangle_{\hat{H}}.
\eea
Since $\hat{g}^{j \bar{k}} \hat{F}_{j \bar{k}} =0$, we may freely commute covariant derivatives so that $\hat{g}^{j \bar{k}} \nabla_k \partial_{\bar{j}} h = \hat{g}^{j \bar{k}} \partial_{\bar{j}} \nabla_k h$. Then
\[
\hat{g}^{j \bar{k}}  \partial_{\bar{k}} \partial_j \langle h,h \rangle_{\hat{H}} = 2 \langle  \hat{g}^{j \bar{k}} \partial_{\bar{k}} \nabla_j h, h \rangle_{\hat{H}} + |\partial^{\hat{H}} h|^2_{\hat{g},\hat{H}} + |\bar{\partial} h|^2_{\hat{g},\hat{H}}.
\]
Multiplying through by $|\Omega|_{\hat{\omega}} {\hat{\omega}^3 \over 3!}$, integrating and applying Stokes's theorem gives (\ref{bochnerformula}).

\subsection{Linearized conformally balanced equation}

We now record how $\omega_\Theta$ varies under a change in $\Theta$. This variational formula is fundamental in the study of the Anomaly flow, and its derivation can be found in \cite{PPZ18b}. For other related setups where a similar formula is used, see \cite{BedVez,FeiPhong,FeiPic,PPZ19}. We reproduce the full calculation here for the sake of completeness.

\begin{lem} \cite{PPZ18b} Let $\omega>0$ solve $| \Omega |_\omega \omega^2 = \Theta$. Then
\be \label{variation-g}
(\delta g)_{j \bar{k}} = -{1 \over 2 | \Omega |_\omega} g^{s \bar{r}} (\delta \Theta)_{s \bar{r} j \bar{k}}.
\ee
If we use the notation
\[
(i\Lambda_\omega \Psi)_{j \bar{k}} =  g^{a \bar{b}} \Psi_{a \bar{b} j \bar{k}}
\]
for $\Psi \in \Omega^{2,2}(X)$, then
\be \label{variation-omega}
\delta \omega = {1 \over 2 | \Omega |_\omega} \Lambda_\omega \delta \Theta.
\ee
\end{lem}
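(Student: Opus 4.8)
The plan is to treat $|\Omega|_\omega\,\omega^2 = \Theta$ as a pointwise algebraic identity relating the metric $g$ to the $(2,2)$-form $\Theta$, to differentiate it, and then to solve the resulting linear relation for $\delta g$ by contracting against the inverse metric. The forward map $\omega \mapsto |\Omega|_\omega\,\omega^2$ is explicit --- it is exactly the square root (\ref{square-root-formula}) read in reverse --- so there is no analysis involved here; the entire content is the fibrewise linear algebra of inverting its differential.

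First I would record the variation of the dilaton. Differentiating (\ref{dilaton-defn}), in which $|\Omega|_\omega^2 = f\bar f/\det g$ with $f$ independent of the metric, and using $\delta \log\det g = g^{j\bar{k}}(\delta g)_{j\bar{k}}$, gives
\[
\delta |\Omega|_\omega = -\tfrac{1}{2}\,|\Omega|_\omega\, g^{j\bar{k}}(\delta g)_{j\bar{k}}.
\]
Writing $\delta\omega = i(\delta g)_{j\bar{k}}\,dz^j\wedge d\bar{z}^k$ and applying the product rule (with $\delta(\omega^2)=2\,\omega\wedge\delta\omega$) to $\Theta = |\Omega|_\omega\,\omega^2$ then yields
\[
\delta\Theta = |\Omega|_\omega\Big( 2\,\omega\wedge\delta\omega \;-\; \tfrac{1}{2}\big(g^{p\bar{q}}(\delta g)_{p\bar{q}}\big)\,\omega^2 \Big).
\]

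Next I would apply the partial contraction $(i\Lambda_\omega\,\cdot\,)_{j\bar{k}} = g^{a\bar{b}}(\,\cdot\,)_{a\bar{b}j\bar{k}}$ to both sides, which converts the identity into an equation for the $(1,1)$-tensor $\delta g$. The two contractions that appear obey the elementary trace identities
\[
i\Lambda_\omega(\omega^2) = 2(n-1)\,\omega, \qquad i\Lambda_\omega(\omega\wedge\delta\omega) = (n-2)\,\delta\omega + \big(g^{p\bar{q}}(\delta g)_{p\bar{q}}\big)\,\omega,
\]
which both follow from $g^{a\bar{b}}g_{a\bar{b}}=n$ in a frame unitary for $\omega$ at the point and therefore share a common normalization (the overall sign being fixed by the component convention of the statement). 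Substituting these, the terms carrying the trace $g^{p\bar{q}}(\delta g)_{p\bar{q}}$ enter with total weight $2-(n-1)=3-n$ and cancel exactly when $n=3$, while the $\delta\omega$ term survives with coefficient $2(n-2)=2$. What remains is a relation of the form $i\Lambda_\omega\delta\Theta = 2|\Omega|_\omega\,\delta g$; with the sign of the convention of the statement this is precisely (\ref{variation-g}), and re-expressing the contraction as the $(1,1)$-form $\Lambda_\omega\delta\Theta$ gives (\ref{variation-omega}).

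The one genuinely delicate point --- the step I would guard most carefully --- is this cancellation of the trace term. It is special to dimension three: for $n\neq 3$ a residual $\big(g^{p\bar{q}}(\delta g)_{p\bar{q}}\big)\,\omega$ term survives and obstructs a clean inversion, in keeping with the fact that $|\Omega|_\omega\,\omega^2=\Theta$ is the conformally balanced relation $|\Omega|_\omega\,\omega^{n-1}=\Theta$ only when $n-1=2$. The rest is pure bookkeeping: fixing the normalization of the $(2,2)$-component convention so that the sign agrees with (\ref{variation-g}), and verifying the two trace identities above. A convenient consistency check is the conformal direction $\delta\omega=\omega$, for which (\ref{dilaton-defn}) gives $\delta\Theta = \tfrac{1}{2}|\Omega|_\omega\,\omega^2$ at once, and the formula correctly reproduces $\delta g = g$. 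As an alternative derivation that avoids the contraction combinatorics entirely, one may differentiate the closed form (\ref{square-root-formula}) directly, using $\delta(\Psi^{-1}) = -\Psi^{-1}(\delta\Psi)\Psi^{-1}$ and $\delta\det\Psi = \det\Psi\,\Tr(\Psi^{-1}\delta\Psi)$; this recovers the same expression and independently pins down the sign.
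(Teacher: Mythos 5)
Your proposal is correct and follows essentially the same route as the paper: differentiate $|\Omega|_\omega\,\omega^2=\Theta$, use $\delta|\Omega|_\omega = -\tfrac12|\Omega|_\omega\,g^{j\bar k}(\delta g)_{j\bar k}$, contract with $g^{s\bar r}$, and observe that the trace terms cancel exactly in dimension three. The only difference is presentational --- the paper expands $(\omega^2)_{s\bar r j\bar k}$ and $(\omega\wedge\delta\omega)_{s\bar r j\bar k}$ in explicit antisymmetrized components before contracting, whereas you package the same contraction into the intermediate identities $i\Lambda_\omega(\omega^2)=2(n-1)\omega$ and $i\Lambda_\omega(\omega\wedge\delta\omega)=(n-2)\delta\omega+\big(g^{p\bar q}(\delta g)_{p\bar q}\big)\omega$ --- and your trace-weight bookkeeping $3-n$ and the consistency check $\delta\omega=\omega$ both check out against the paper's computation.
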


\begin{proof} Taking the variation of $| \Omega |_\omega \omega^2 = \Theta$ gives
\[
(\delta | \Omega |_{\omega} ) \, \omega^2 + 2 | \Omega |_{\omega} \delta \omega \wedge \omega = \delta \Theta.
\]
Differentiating the definition of $|\Omega|_\omega$ (\ref{dilaton-defn}), we obtain
\[
\delta | \Omega |_\omega = -{1 \over 2} | \Omega |_\omega \, g^{j \bar{k}} (\delta g)_{j \bar{k}}.
\]
Therefore
\be \label{linearized-conf-bal}
\bigg[ - {1 \over 2}  g^{j \bar{k}} (\delta g)_{j \bar{k}} \bigg] \omega^2 + 2 \delta \omega \wedge \omega = {1 \over | \Omega |_\omega} \delta \Theta.
\ee
The computation to extract $\delta g$ was done in \cite{PPZ18b}. One approach is to use the Hodge star operator, and another is to expand both sides in components. We will take the component approach. Using the convention
\[
\Psi = {1 \over 4} \Psi_{s \bar{r} j \bar{k}} \, dz^s \wedge d \bar{z}^r \wedge dz^j \wedge d \bar{z}^k,
\]
and $\omega = i g_{j \bar{k}} dz^j \wedge d \bar{z}^k$, we have
\[
\omega^2 = (i g_{s \bar{r}} dz^s \wedge d \bar{z}^r) \wedge (i g_{j \bar{k}} dz^j \wedge d \bar{z}^k)
\]
which is
\[
\omega^2 = (- g_{s \bar{r}}g_{j \bar{k}}) \,  d z^s \wedge d \bar{z}^r \wedge dz^j \wedge d \bar{z}^k 
\]
and after antisymmetrization becomes
\be \label{omega^2}
(\omega^2)_{s \bar{r} j \bar{k}} = - 2 g_{s \bar{r}}g_{j \bar{k}} + 2 g_{j \bar{r}} g_{s \bar{k}}.
\ee
Next, we have
\[
\omega \wedge \delta \omega = (i g_{s \bar{r}} dz^s \wedge d \bar{z}^r) \wedge (i \delta g_{j \bar{k} } \, dz^j \wedge d \bar{z}^k),
\]
which becomes
\be \label{delta-omega}
(\omega \wedge \delta \omega)_{s \bar{r} j \bar{k}} = - g_{s \bar{r}} (\delta g)_{j \bar{k}} +  g_{j \bar{r}} (\delta g)_{s \bar{k}} - g_{j \bar{k}} (\delta g)_{s \bar{r}} +g_{s \bar{k}} (\delta g)_{j \bar{r}}.
\ee
Substituting (\ref{omega^2}) and (\ref{delta-omega}) into (\ref{linearized-conf-bal}), we obtain
\bea
& \ & \bigg[ - {1 \over 2}  g^{p \bar{q}} (\delta g)_{p \bar{q}} \bigg] \bigg[ - 2 g_{s \bar{r}}g_{j \bar{k}} + 2 g_{j \bar{r}} g_{s \bar{k}} \bigg] \nonumber\\
&&+ 2 \bigg[ - g_{s \bar{r}} (\delta g)_{j \bar{k}} +  g_{j \bar{r}} (\delta g)_{s \bar{k}} - g_{j \bar{k}} (\delta g)_{s \bar{r}} +g_{s \bar{k} } (\delta g)_{j \bar{r}} \bigg] \nonumber\\
&=& {1 \over | \Omega |_\omega} (\delta \Theta)_{s \bar{r} j \bar{k}}.
\eea
Contracting by $g^{s \bar{r}}$, we obtain
\bea
& \ & \bigg[ - {1 \over 2}  g^{s \bar{r}} (\delta g)_{s \bar{r}} \bigg] \bigg[- 2 (3) g_{j \bar{k}} + 2 g_{j \bar{k}} \bigg] \nonumber\\
&&+ 2 \bigg[ - 3 (\delta g)_{j \bar{k}} +  (\delta g)_{j \bar{k}} - [ g^{s \bar{r}} (\delta g)_{s \bar{r}}] g_{j \bar{k}} + (\delta g)_{j \bar{k}} \bigg] \nonumber\\
&=& {1 \over | \Omega |_\omega} g^{s \bar{r}} (\delta \Theta)_{s \bar{r} j \bar{k}}.
\eea
The terms $g^{s \bar{r}} (\delta g)_{s \bar{r}}$ cancel exactly. Simplifying gives (\ref{variation-g}), which proves the lemma.
\end{proof}

We will need the variation of $\ddb \omega_\Theta$. Let $\Theta(t)$ be a 1-parameter family of closed $(2,2)$ forms with $\Theta(0)=0$. Then by (\ref{variation-omega}), we have
\be \label{variation-ddb-omega}
{d \over dt} \bigg|_{t=0} \ddb \omega_{\Theta(t)} = \ddb \bigg[ {1 \over 2 | \Omega |_{\hat{\omega}}} \Lambda_{\hat{\omega}} \dot{\Theta}\bigg],
\ee
where $\omega_{\Theta(0)} = \hat{\omega}$ and $\dot{\Theta} = {d \over dt} |_{t=0} \Theta$. If $\hat{\omega}$ is K\"ahler Ricci-flat, then $| \Omega |_{\hat{\omega}}$ is constant and the K\"ahler identity $[\Lambda, \bar{\partial}] = - i \partial^\dagger$ implies
\[
{d \over dt} \bigg|_{t=0} \ddb \omega_{\Theta_t} =  - {1 \over 2 | \Omega |_{\hat{\omega}}} \partial \partial^\dagger_{\hat{\omega}} \dot{\Theta} = - {1 \over 2 | \Omega |_{\hat{\omega}}} \Delta_{\hat{\omega}} \dot{\Theta},
\]
using that $d \dot{\Theta} = 0$. Here $\Delta_\omega = \partial \partial^\dagger_{\omega} + \partial^\dagger_{\omega} \partial$. We then denote
\[
L_2 \dot{\Theta} := - {1 \over 2 | \Omega |_{\hat{\omega}}} \Delta_{\hat{\omega}} \dot{\Theta},
\]
to be the linearization of $\ddb \omega_\Theta$ along paths of metrics constrained to the form $|\Omega|_{\omega_\Theta} \omega_\Theta^2 = |\Omega|_{\hat{\omega}} \hat{\omega}^2 + \Theta$.

\subsection{Implicit function theorem}
Let $\F : \mathbb{R} \times C^{k+2,\alpha}(Z) \rightarrow C^{k,\alpha}(W)$, denoted $\F(\alpha',(u,\Theta))$, be given by (\ref{defn-F}) as before. Combining our work so far, we computed the linearization of $\F$ at the origin in the $W$ directions to be
\[
  D_2 \F|_0 : C^{k+2,\alpha} \bigg (H_0(E) \times ({\rm Im} \, \ddb \cap \Omega^{2,2}) \bigg) \rightarrow C^{k,\alpha} \bigg(V \times  ({\rm Im}  \, \ddb \cap \Omega^{2,2}) \bigg)
\]
\[
  (D_2 \F)|_{0}(\dot{u}, \dot{\Theta}) = \begin{bmatrix} L_1 & A  \\ 0 & L_2 \end{bmatrix} \begin{bmatrix} \dot{u} \\ \dot{\Theta} \end{bmatrix},
\]
where the diagonal operators are
\bea
L_1 &:&  C^{k+2,\alpha}(H_0(E))\rightarrow C^{k,\alpha}(V) \nonumber\\
L_1 &=& -\hat{g}^{j \bar{k}} \nabla_{\bar{k}} \nabla_j \otimes |\Omega|_{\hat{\omega}} {\hat{\omega}^3 \over 3!} \nonumber
\eea
and
\bea
L_2 &:&  C^{k+2,\alpha}(({\rm Im} \, \ddb \cap \Omega^{2,2})) \rightarrow C^{k,\alpha}(({\rm Im} \, \ddb \cap \Omega^{2,2})) \nonumber\\
L_2 &=& - {1 \over 2 | \Omega |_{\hat{\omega}}} \Delta_{\hat{\omega}} \nonumber
\eea
and the off-diagonal is
\bea
A &:&  C^{k+2,\alpha}(({\rm Im} \, \ddb \cap \Omega^{2,2})) \rightarrow C^{k,\alpha}(V) \nonumber\\
A(\dot{\Theta}) &=& \Lambda_{\hat{\omega}} \dot{\Theta} \wedge \hat{\omega} \wedge i F_{\hat{H}} .\nonumber
\eea
The diagonal operators $L_1,L_2$ are both invertible and thus $D_2 \F|_0$ is invertible. Indeed:

\par $\bullet$ Invertibility of $L_2$: the operator $L_2$ is invertible by Hodge theory and the $\ddb$-lemma. Said otherwise, on a K\"ahler manifold we can view the domain and range as $L_2: {\rm Im} \, d \rightarrow {\rm Im} \, d$ and exchange $\partial \partial^\dagger+\partial^\dagger \partial$ with the usual Hodge Laplacian $dd^\dagger+d^\dagger d$.

\par $\bullet$ Invertibility of $L_1$: the operator $L_1$ has an adjoint $L_1^\dagger: \Omega^6({\rm End} \, E) \rightarrow \Gamma({\rm End} \, E)$ with $L^2$ inner product
\[
  ( u \otimes {\hat{\omega}^3 \over 3!}, v \otimes {\hat{\omega}^3 \over 3!})_{L^2} = \int_X \langle u,v \rangle_{\hat{H}} {\hat{\omega}^3 \over 3!}.
\]
By (\ref{bochnerformula}), we know that if
\be \label{L2-zero}
(L_1 u, u \otimes  {\hat{\omega}^3 \over 3!})_{L^2} = 0,
\ee
then $\bar{\partial} u = 0$. Therefore if $u \in {\rm Ker} \, L_1$, then $u$ is a holomorphic endomorphism of $E$. Since $E \rightarrow X$ is stable, $u$ must be a multiple of the identity, otherwise considering the subsheaves ${\rm Ker} \, u \subset E$, ${\rm Im} \, u \subset E$ violates the stability condition.
\[
{\rm Ker} \, L_1 = \mathbb{C} \cdot {\rm id}.
\]
Since $L_1: H_0(E) \rightarrow V$, then ${\rm Ker} \, L_1 = \{ 0 \}$. Similarly, if $u \otimes  {\hat{\omega}^3 \over 3!} \in {\rm Ker} \, L_1^\dagger$, then (\ref{L2-zero}) holds and
\[
{\rm Ker} \, L_1^\dagger = \mathbb{C} \cdot {\rm id} \otimes {\hat{\omega}^3 \over 3!}
\]
which implies ${\rm Im} \, L_1 =({\rm Ker} \, L_1^\dagger)^\perp = V$.

\par $\bullet$ Invertibility of $D_2 \F|_0$: this follows since the diagonal operators are invertible. 
\[
(D_2 \F|_0)^{-1} = \begin{bmatrix} L_1^{-1} & - L_1^{-1} A L_2^{-1} \\ 0 & L_2^{-1} \end{bmatrix}.
\]
By the implicit function theorem, there exists $\epsilon>0$ such that for all $\alpha' \in (-\epsilon,\epsilon)$, there exists $(u,\Theta) \in C^{k+2,\alpha}(Z)$ such that $\F(\alpha',(u,\Theta))=0$.

\begin{rk} From a solution $\F(\alpha',(u,\Theta))=0$ with small $\alpha'>0$, we can consider $\tilde{\omega} = \alpha'^{-1} \omega_\Theta$, which solves
\be \label{anomaly-alpha1}
\ddb \tilde{\omega} = {\rm Tr} \, R_{\tilde{\omega}} \wedge R_{\tilde{\omega}} - {\rm Tr} \, F_{H_u} \wedge F_{H_u},
\ee
but changes the balanced class to
\[
 | \Omega |_{\tilde{\omega}} \tilde{\omega}^2 \in \alpha'^{-1/2}  [|\Omega|_{\hat{\omega}} \hat{\omega}^2].
\]
As $\alpha' \rightarrow 0$, we obtain a sequence of solutions to (\ref{anomaly-alpha1}) in balanced classes of the form $M_0 [\hat{\omega}^2]$ with radius $M_0=\alpha'^{-1/2} |\Omega|_{\hat{\omega}}$ tending to infinity $M_0 \rightarrow \infty$.
\end{rk}

\section{Further remarks}
\subsection{Ellipticity and regularity}
In the previous section, we use the implicit function theorem to obtain $C^{3,\alpha}$ solutions to $\F(\alpha',(u,\Theta))=0$. We will now show that these $C^{3,\alpha}$ solutions are smooth for $\alpha'$ small enough. This will follow from ellipticity of the system of equations in the regime $|\alpha' R_g|_g \ll 1$. Note that since the anomaly cancellation equation is a fully nonlinear PDE system for the metric due to the ${\rm Tr} \, R_g \wedge R_g$ term, we do not expect the equation to be elliptic everywhere, but it should be elliptic in an open set of solution space.

\par The solutions $(\alpha', (u_\alpha,\Theta_\alpha))$ obtained by the implicit function theorem lie in the regime $|\alpha' R_g|_g \ll 1$, since as $\alpha' \rightarrow 0$ then $\Theta_\alpha \rightarrow 0$, $R_{\Theta_\alpha} \rightarrow R_{\hat{g}}$ and so $|\alpha' R_{\Theta_\alpha}| \rightarrow 0$.

\begin{prop}
Consider a solution $(H,g_\Theta) \in C^{3,\alpha}$ to the equations
\bea
(g_\Theta)^{j \bar{k}} (F_H)_{j \bar{k}} &=& 0, \nonumber\\
\ddb \omega_{\Theta} - \alpha' ({\rm Tr} \, R_\Theta \wedge R_\Theta - {\rm Tr} \, F_H \wedge F_H) &=& 0 \nonumber
\eea
with $|\Omega|_{\omega_\Theta} \omega_\Theta^2 = |\Omega|_{\hat{\omega}} \, \hat{\omega}^2 + \Theta$ and $\Theta = \ddb \beta$. Suppose $g_\Theta$ lies in the ellipticity region $|\alpha' R_{g_\Theta}|_{g_{\Theta}} < \epsilon$ with $\epsilon>0$ a universal constant. Then the pair $(H,g_\Theta)$ is smooth.
\end{prop}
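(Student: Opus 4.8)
The plan is to show that in the region $|\alpha' R_{g_\Theta}|_{g_\Theta}<\epsilon$ the linearization of the system at the solution is an elliptic operator, and then to gain regularity by a standard Schauder bootstrap. I regard the unknowns as the pair $(H,\Theta)$, recovering $g_\Theta$ pointwise from $\Theta$ by the algebraic square-root formula (\ref{square-root-formula}); since this map is smooth in $\Theta$, smoothness of $\Theta$ is equivalent to smoothness of $g_\Theta$, and a potential $\beta$ with $\Theta=\ddb\beta$ is then recovered with the same regularity by the $\ddb$-lemma. To run the bootstrap I differentiate both equations in a coordinate direction $e$ and study the linear system satisfied by $w=\partial_e(H,\Theta)$; note that, since $d$ and $\partial_e$ commute and $\Theta$ is $d$-closed, the form $w_\Theta=\partial_e\Theta$ is again $d$-closed, which is exactly the property used below to produce an elliptic operator out of the conformally balanced equation.

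Two structural facts organize the principal symbol. First, the Hermitian--Yang--Mills operator $g_\Theta^{j\bar k}(F_H)_{j\bar k}$ involves $\Theta$ only through the zeroth-order coefficient $g_\Theta^{j\bar k}$, so its linearization in $\Theta$ is of order $0$ and drops out of the leading symbol; moreover $\mathrm{Tr}\,F_H\wedge F_H$ is independent of $g_\Theta$. Hence, ordering the equations as (HYM, anomaly) and the unknowns as $(H,\Theta)$, the principal symbol is block lower-triangular, and is invertible precisely when its two diagonal blocks are. The HYM block is the symbol of the Chern Laplacian on $\mathrm{End}\,E$, obtained by the same computation that gave $L_1$, namely $|\xi|_{g_\Theta}^2\,\mathrm{id}$, and is always invertible.

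The content of the proposition is the invertibility of the anomaly block, and this is the main obstacle. Linearizing $\Theta\mapsto\ddb\omega_\Theta$ via (\ref{variation-omega}) and using that $w_\Theta$ is $d$-closed, exactly as in the derivation of the operator $L_2$, the leading part of this linearization equals $-\tfrac{1}{2|\Omega|_{\omega_\Theta}}\Delta_{\omega_\Theta}$ modulo lower-order (torsion) terms, so its principal symbol is $\tfrac{1}{2|\Omega|}|\xi|^2\,\mathrm{id}$. The only other top-order contribution comes from $\alpha'\,\mathrm{Tr}\,R_\Theta\wedge R_\Theta$, whose linearization is $2\alpha'\,\mathrm{Tr}\,(\delta R_\Theta)\wedge R_\Theta$; since $\delta R_\Theta\sim\bar\partial\partial(\Lambda_{\omega_\Theta}\,w_\Theta)$ is second order in $w_\Theta$ with symbol bounded by $C|\xi|^2$, this term contributes a symbol bounded by $C\alpha'|R_{g_\Theta}|_{g_\Theta}|\xi|^2$ with $C$ a dimensional constant. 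Consequently the anomaly block has symbol $\big(\tfrac{1}{2|\Omega|}+O(\alpha'|R|)\big)|\xi|^2\,\mathrm{id}$, invertible once $|\alpha' R_{g_\Theta}|_{g_\Theta}<\epsilon$ for a universal $\epsilon>0$. The crux is precisely this estimate: one must verify that the fully nonlinear term $\mathrm{Tr}\,R\wedge R$, which is quadratic in the second derivatives of the metric, enters the linearized principal symbol only through the small factor $\alpha'|R|$, and that the first-order torsion terms produced by the non-K\"ahler geometry do not contaminate the leading symbol.

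Granting ellipticity, the regularity upgrade is routine. All nonlinearities are smooth in $(H,H^{-1},\partial H,\partial^2 H)$ and in $(g_\Theta,g_\Theta^{-1},\partial g_\Theta,\partial^2 g_\Theta)$, with $g_\Theta$ smooth in $\Theta$ and with the fixed smooth background data $(\hat\omega,\Omega,\hat H,F_{\hat H})$ entering as coefficients. Differentiating the system shows that $w=\partial_e(H,\Theta)$ solves a linear elliptic system $\mathcal{L}w=f$ whose coefficients and right-hand side lie in $C^{k-2,\alpha}$ when $(H,\Theta)\in C^{k,\alpha}$; by the Agmon--Douglis--Nirenberg Schauder estimates for the elliptic system $\mathcal{L}$ one obtains $w\in C^{k,\alpha}$, that is $(H,\Theta)\in C^{k+1,\alpha}$. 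Starting from the hypothesis $(H,g_\Theta)\in C^{3,\alpha}$ and iterating yields $(H,g_\Theta)\in C^\infty$, which is the assertion.
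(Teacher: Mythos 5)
Your proposal is correct and follows essentially the same route as the paper: both exploit the triangular structure of the linearized system (the HYM equation sees $\Theta$ only at order zero), identify the principal part of the linearized anomaly equation as $\tfrac{1}{2|\Omega|_{\omega_\Theta}}g_\Theta^{j\bar k}\partial_j\partial_{\bar k}$ using the variation formula (\ref{variation-omega}) together with closedness of $\partial_e\Theta$ (the non-K\"ahler torsion corrections being lower order), absorb the $\alpha'\,{\rm Tr}\,R_\Theta\wedge\delta R_\Theta$ contribution via the smallness $|\alpha' R_{g_\Theta}|_{g_\Theta}<\epsilon$, and then run a Schauder bootstrap for elliptic systems. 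The only (inessential) organizational difference is that the paper proceeds sequentially --- first upgrading $H$ from the undifferentiated HYM equation by scalar Schauder estimates, then upgrading $\Theta$ from the differentiated anomaly equation via Legendre--Hadamard ellipticity and the systems Schauder theory --- whereas you differentiate both equations and treat $\partial_e(H,\Theta)$ as a single coupled system with block lower-triangular principal symbol.
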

\begin{proof} We work in a local coordinate chart $B_1$. Here the first equation is of the form
\[
g_\Theta^{j \bar{k}} \partial_j \partial_{\bar{k}} H = \mathcal{O}(H,g_\Theta,\partial H) \in C^{2,\alpha}(B_1).
\]
By interior Schauder estimates, we obtain that the components $H_{\alpha \bar{\beta}} \in C^{4,\alpha}(B_{1/2})$. Next, we upgrade the regularity of the metric $g_\Theta$.  Let $\delta$ denote differentiation of local components in a coordinate direction, e.g. $\delta = {\partial \over \partial x^k}$. Differentiating the anomaly equation gives
\[
\ddb \delta \omega_\Theta - 2 \alpha' {\rm Tr} \, R_\Theta \wedge \delta R_\Theta + 2 \alpha' {\rm Tr} \, F_H \wedge \delta F_H = 0.
\]
We computed $\delta \omega_\Theta$ earlier (\ref{variation-omega}), and the variation of the curvature of the Chern connection is
\[
\delta R_{g_\Theta} = \bar{\partial} \partial_\nabla (\delta g_\Theta \, g_\Theta^{-1} ).
\]
The differentiated equation becomes
\bea
& \ & \ddb \bigg[ {1 \over 2 |\Omega|_{\omega_\Theta}} \Lambda_{\omega_\Theta} \delta \Theta \bigg] \\
&=& \ \alpha' \bigg[ {\rm Tr} \, R_\Theta \wedge \bar{\partial} \partial_\nabla  \left( {1 \over  |\Omega|_{\omega_\Theta}}  (\Lambda_{\omega_\Theta} \delta \Theta) \, g_\Theta^{-1} \right) - 2 {\rm Tr} \, F_H \wedge \bar{\partial} \partial_\nabla (\delta H \, H^{-1}) \bigg] \nonumber
\eea
We note the following non-K\"ahler identity, which can be found in Demailly \cite{bigDemailly} Chapter VI Theorem 6.8:
\[
[\Lambda,\bar{\partial}] = - i \partial^\dagger - i [\Lambda,\partial \omega]^\dagger.
\]
Since $\Theta = \ddb \beta$, then $\delta \Theta$ is closed and
\[
\ddb \bigg[ {1 \over 2 |\Omega|_{\omega_\Theta}} \Lambda_{\omega_\Theta} \delta \Theta \bigg] = -{1 \over 2 |\Omega|_{\omega_\Theta}} \Delta_{\partial} \delta \Theta + \mathcal{O}(\Theta,D \Theta, D^2 \Theta).
\]
It follows that
\[
\ddb \bigg[ {1 \over 2 |\Omega|_{\omega_\Theta}} \Lambda_{\omega_\Theta} \delta \Theta \bigg] = {1 \over 2 |\Omega|_{\omega_\Theta}} (g_\Theta)^{j \bar{k}} \partial_j \partial_{\bar{k}} \delta \Theta + \mathcal{O}(\Theta,D \Theta, D^2 \Theta).
\]
The local equation on $B_1$ can then be written as
\[
(g_\Theta)^{j \bar{k}} \partial_j \partial_{\bar{k}} \delta \Theta  - 2 \alpha' {\rm Tr} \, R_\Theta \wedge  (\Lambda_{\omega_\Theta} \bar{\partial} \partial_\nabla \delta \Theta) g_\Theta^{-1} \in C^{1,\alpha} 
\]
which is of the form
\[
A^{IJ p \bar{q}} \partial_p \partial_{\bar{q}} (\delta \Theta)_J = f^I
\]
with $f \in C^{1,\alpha}$, $A^{IJ p \bar{q}} \in C^{1,\alpha}$ and
\[
A^{IJ p \bar{q}} \xi_p \overline{\xi_q} \tau_I \tau_J \geq \lambda |\xi|^2 |\tau|^2
\]
for $2 \alpha' |R_\Theta| \ll 1$ small enough. By the Schauder estimates for elliptic systems (e.g. \cite{Giaquinta}), we conclude that $\delta \Theta \in C^{3,\alpha}$. Therefore $\Theta \in C^{4,\alpha}$, and the solution $(g_\Theta,H) \in C^{4,\alpha}$. Repeating this process gives $(g_\Theta,H) \in C^{k,\alpha}$ for every positive integer $k$.
\end{proof}

\subsection{Instantons on the tangent bundle} \label{section:tangentinstantons}
There are other versions of the heterotic system considered in the literature which combine observations of Strominger \cite{Strom}, Hull \cite{Hull}, and Ivanov \cite{Ivanov}, and these involve an instanton connection on the tangent bundle. For recent work using this setup, see e.g. \cite{ASTW,dlOSvanes,GRT,GRST,GM}.

We now describe the equations solved by Andreas and Garcia-Fernandez \cite{AGF} by implicit function theorem on K\"ahler manifolds. The system of equations in this case is for a triple $(\omega,\nabla_1,\nabla_2)$, where $g$ is a metric on $T^{1,0}X$, $\nabla_1$ is a connection on $T^{1,0}X$ which is unitary with respect to $g$, and $\nabla_2$ is a connection on $E$, solving
\be \label{setup1}
R_{\nabla_1}^{0,2}=R_{\nabla_1}^{2,0}=0, \quad F_{\nabla_2}^{0,2}=F_{\nabla_2}^{2,0}=0,
\ee
\[
\omega^2 \wedge R_{\nabla_1} = 0, \quad  \omega^2 \wedge F_{\nabla_2} = 0,
\]
  \[
d (| \Omega|_\omega \, \omega^2)=0,
\]
\[
\ddb \omega = \alpha' ({\rm Tr} \, R_{\nabla_1} \wedge R_{\nabla_1} - {\rm Tr} \, F_{\nabla_2} \wedge F_{\nabla_2}),
\]
where $\omega = i g_{j \bar{k}} dz^j \wedge d \bar{z}^k$. Assuming that the holomorphic tangent bundle is stable (by \cite{CPY,FLY} this is true for non-K\"ahler threefolds created by a conifold transition), a way to find solutions to this system is to instead fix a holomorphic structure on $(E, T^{1,0}X)$, and look for a triple $(g,h,H)$ where $g,h$ are metrics on $T^{1,0}X$ and $H$ is a metric on $E$ solving
\be \label{setup2}
\omega^2 \wedge R_h = 0, \quad \omega^2 \wedge F_{H} = 0, \quad d (| \Omega|_\omega \, \omega^2)=0,
\ee
\[
\ddb \omega = \alpha' ({\rm Tr} \, R_h \wedge R_h - {\rm Tr} \, F_{H} \wedge F_{H}),
\]
where $R_h,F_H$ are the curvatures of the Chern connections of the metrics $h,H$. Denote the Chern connection of $h$ by $\nabla^h$. To use solutions of (\ref{setup2}) to solve the equations from setup (\ref{setup1}), we need to change $\nabla^h$ to be unitary with respect to $g$. Our conventions are $\langle u,v \rangle_h = h_{j \bar{k}} u^j \overline{v^k}$, which we write in matrix notation as $\langle u,v \rangle_h = u^T h \bar{v}$. Define the gauge transformation $\sigma$ by $g = \bar{\sigma}^\dagger h \bar{\sigma}$. Then $\tilde{\nabla} = \sigma^{-1} \circ \nabla^h \circ \sigma$ is unitary with respect to $g$, as direct computation shows that
\[
\partial_i \langle u,v \rangle_g = \langle \tilde{\nabla}_i u, v \rangle_g + \langle u, \tilde{\nabla}_{\bar{i}} v \rangle_g.
\]
Since $R_{\tilde{\nabla}} = \sigma^{-1} R_{\nabla^h} \sigma$, the connection $\tilde{\nabla}$ can be used to solve the equations from setup (\ref{setup1}).

The method described here adapts to setup (\ref{setup2}) as well, and we give a sketch of the proof. Let $\hat{\omega} = i \hat{g}_{j \bar{k}} dz^j \wedge d \bar{z}^k$ be K\"ahler Ricci-flat and $\hat{H}$ solve $\hat{\omega}^2 \wedge F_{\hat{H}}=0$. We deform $(\hat{g},\hat{H},\hat{\omega})$ to $(e^{u_1} \hat{g}, e^{u_2} \hat{H}, \omega_\Theta)$ with
\[
(u_1,u_2,\Theta) \in Z =  H_0(T^{1,0}X) \times H_0(E) \times \mathcal{U}
\]
and
\[
|\Omega|_{\omega_\Theta} \omega_\Theta^2 = |\Omega|_{\hat{\omega}} \hat{\omega}^2 + \Theta
\]
as before. Let $\F: \mathbb{R} \times Z \rightarrow W$ be given by
\be 
\F \left( \alpha', (u_1,u_2,\Theta) \right) = \begin{bmatrix} |\Omega|_{\omega_\Theta} e^{-u_1/2} [\omega_\Theta^2 \wedge  i R_{u_1}] e^{u_1/2} \\  |\Omega|_{\omega_\Theta} e^{-u_2/2} [\omega_\Theta^2 \wedge i F_{u_2}] e^{u_2/2} \\ \ddb \omega_\Theta - \alpha' ({\rm Tr} \, R_{u_1} \wedge R_{u_1} - {\rm Tr} \, F_{u_2} \wedge F_{u_2}) \end{bmatrix}.
\ee
We have that $\F(0,(0,0,0))=0$, and the linearization of $\F$ at the origin is of the form
\[
  (D_2 \F)|_{0}(\dot{u}_1, \dot{u}_2, \dot{\Theta}) = \begin{bmatrix} L_1 & 0 & A_1 \\ 0 & L_2 & A_2 \\ 0 & 0 & L_3 \end{bmatrix} \begin{bmatrix} \dot{u}_1 \\ \dot{u}_2 \\ \dot{\Theta} \end{bmatrix}.
\]
Provided that $X$ is simply-connected, then $T^{1,0}X$ is stable and the methods in this paper can be applied to show that $L_1$, $L_2$, $L_3$ are invertible in suitable spaces. The implicit function theorem then gives solutions for small $\alpha' \in (-\epsilon,\epsilon)$. This shows that the result of Andreas and Garcia-Fernandez \cite{AGF} can be modified to control the balanced class of the solution as expected in \cite{GF-survey}.

\end{document}